\newcommand\ttimes{\mathbin{\ThisStyle{\ensurestackMath{%
  \stackengine{-1\LMpt}{\SavedStyle\times}
  {\SavedStyle_{\hstretch{.9}{\mkern1mu\sim}}}{O}{c}{F}{T}{S}}}}}
\newtheorem{theorem} {{\textsf{Theorem}}}
\newtheorem{proposition}[theorem]{{\textsf{Proposition}}}
\newtheorem{definition}[theorem]{{\textsf{Definition}}}
\newtheorem{remark}[theorem]{{\textsf{Remark}}}
\newtheorem{lemma}[theorem]{{\textsf{Lemma}}}
\newcommand{\xdownarrow}[1]{%
  {\left\downarrow\vbox to #1{}\right.\kern-\nulldelimiterspace}
}
\begin{document} 

\title{Handle decompositions for a class of closed orientable PL 4-manifolds}
\author{Biplab Basak and Manisha Binjola}

\date{}

\maketitle

\vspace{-15mm}
\begin{center}

\noindent {\small Department of Mathematics, Indian Institute of Technology Delhi, New Delhi 110016, India.$^1$}

%\end{center}

\footnotetext[1]{{\em E-mail addresses:} \url{biplab@iitd.ac.in} (B.
Basak), \url{binjolamanisha@gmail.com} (M. Binjola).}

\medskip

%\begin{center}
\date{June 22, 2023}
\end{center}

\hrule

\begin{abstract}
In this article, we study a class of closed connected orientable PL $4$-manifolds admitting a semi-simple crystallization and which have an infinite cyclic fundamental group. We show that the manifold in the class admits a handle decomposition in which the number of $2$-handles depends upon its second Betti number and other $h$-handles ($h \leq 4$) are at most $2$. 
More precisely, our main result is the following. For a closed connected orientable PL $4$-manifold having a semi-simple crystallization with the  fundamental group as $\mathbb{{Z}}$, we have constructed a handle decomposition for $M$ as one of the following types:
\begin{enumerate}[$(1)$]
\item one $0$-handle, two $1$-handles, $1+\beta_2(M)$ $2$-handles, one $3$-handle and one $4$-handle,
\item one $0$-handle, one $1$-handle, $\beta_2(M)$ $2$-handles, one $3$-handle and one $4$-handle,
\end{enumerate}
 where $\beta_2(M)$ denotes the second Betti number of manifold $M$ with $\mathbb{Z}$ coefficients.
\end{abstract}
 
\noindent {\small {\em MSC 2020\,:} Primary 57Q15; Secondary  05C15, 05E45, 57Q05, 57M15, 57M50.

\noindent {\em Keywords:} PL-manifolds,  Crystallizations, Regular genus, Handle decomposition.}

\medskip

\section{Introduction}

A crystallization $(\Gamma,\gamma)$ of a closed connected PL $d$-manifold is a certain type of edge colored graph which represents the manifold (details provided in Subsection \ref{crystal}). The journey of crystallization theory has begun due to Pezzana who gives the existence of a crystallization  for every closed connected PL $d$-manifold (see \cite{pe74}). 

\smallskip

Extending the notion of genus in $2$ dimension, the term regular genus for a closed connected PL $d$-manifold has been introduced in \cite{g81}, which is related to the existence of regular embeddings of graphs representing the manifold into surfaces (cf. Subsection \ref{sec:genus} for details). 
%\textcolor{red}{Later, in \cite{ga87}, the concept of regular genus has been extended for compact PL $d$-manifolds with boundary, for $d\geq 2$. The same terminology is available for singular manifolds.} In \cite{bb18}, the fist author constructed  genus-minimal crystallizations of PL 4-manifolds.

\smallskip

  In \cite{bc17}, the term semi-simple gems is introduced for closed $4$-manifolds. In this paper, we particularly work on closed connected PL $4$-manifolds admitting semi-simple crystallizations.
  % \textcolor{blue}{ In \cite{cc19}, the concept of semi-simple crystallization is extended to the compact manifolds with connected boundary. In \cite{bm21}, there is a similar concept for the manifolds with any number of boundary components.}
 
 \smallskip
 
  A problem for closed $4$-manifolds was posed by Kirby and which can be formulated as : ``Does every simply connected closed $4$-manifold have a handlebody decomposition without $1$-handles?" Many researchers worked on it for the decades, in manifold with boundary as well.
  %\textcolor{red}{, like Trace's work in \cite{t80} and \cite{t82}. It is also known that every contractible $4$-manifold with boundary other than $\mathbb{D}^4$ must have $1$- or $3$-handles.}
  
  \smallskip
 
  In this paper, we extend the earlier known work to the closed connected $4$-manifolds with the  fundamental group $\mathbb{Z}$ and precisely take a large class of manifolds admitting semi-simple crystallizations. Also, the class of PL $4$-manifolds admitting semi-simple crystallizations is not completely known by now. Recently in \cite{cc20}, the authors gave a class of compact $4$-manifolds with empty or connected boundary which admit a special handle decomposition lacking in $1$-handles and $3$-handles. In this article, we show the following:  for a closed connected orientable PL $4$-manifold having a semi-simple crystallization with fundamental group as $\mathbb{{Z}}$, we construct a handle decomposition for $M$ as one of the following types:
  \begin{enumerate}[$(1)$]
\item  one $0$-handle, two $1$-handles, $1+\beta_2(M)$ $2$-handles, one $3$-handle and one $4$-handle,
\item one $0$-handle, one $1$-handle, $\beta_2(M)$ $2$-handles, one $3$-handle and one $4$-handle.
\end{enumerate}

\section{Preliminaries}
Crystallization theory  provides a combinatorial tool for representing piecewise-linear (PL) manifolds of arbitrary dimension via colored graphs and is used to study geometrical and topological properties of manifolds.

\subsection{Crystallization} \label{crystal}

For a multigraph $\Gamma= (V(\Gamma),E(\Gamma))$ without loops, a surjective map $\gamma : E(\Gamma) \to \Delta_d:=\{0,1, \dots , d\}$ is called a proper edge-coloring if $\gamma(e) \ne \gamma(f)$ for any two adjacent edges $e$ and $f$. The elements of the set $\Delta_d$ are called the {\it colors} of $\Gamma$. A graph $(\Gamma,\gamma)$ is called {\it $(d+1)$-regular} if degree of each vertex is $d+1$.
% and is said to be {\it $(d+1)$-regular with respect to a color $c$} if the graph is $d$-regular after removing all the edges of color $c$ from $\Gamma$.
 We refer to \cite{bm08} for standard terminology on graphs. All spaces and maps will be considered in PL-category.

\smallskip

A  regular {\it $(d+1)$-colored graph} is a pair $(\Gamma,\gamma)$, where $\Gamma$ is $(d+1)$-regular and $\gamma$ is a proper edge-coloring. 
% A {\it $(d+1)$-colored graph with boundary} is a pair $(\Gamma,\gamma)$, where $\Gamma$ is not a $(d+1)$-regular graph but a $(d+1)$-regular with respect to a color $c$ and $\gamma$ is a proper edge-coloring. 
If there is no confusion with coloration, one can use $\Gamma$ for $(d+1)$-colored graphs instead of $(\Gamma,\gamma)$. For each $B \subseteq \Delta_d$ with $h$ elements, the graph $\Gamma_B =(V(\Gamma), \gamma^{-1}(B))$ is an $h$-colored graph with edge-coloring $\gamma|_{\gamma^{-1}(B)}$. For a color set $\{j_1,j_2,\dots,j_k\} \subset \Delta_d$, $g(\Gamma_{\{j_1,j_2, \dots, j_k\}})$ or $g_{j_1j_2 \dots j_k}$ denotes the number of connected components of the graph $\Gamma_{\{j_1, j_2, \dots, j_k\}}$. 
%Let $\dot{g}_{j_1j_2 \dots j_k}$ denote the number of regular components of $\Gamma_{\{j_1, j_2, \dots, j_k\}}$
. A graph $(\Gamma,\gamma)$ is called {\it contracted} if subgraph $\Gamma_{\hat{c}}:=\Gamma_{\Delta_d\setminus c}$ is connected for all $c$.

\smallskip
 
Let $\mathbb{G}_d$ denote the set of regular $(d+1)$-colored graphs $(\Gamma,\gamma)$.  For each  $(\Gamma,\gamma) \in \mathbb{G}_d$, a corresponding $d$-dimensional simplicial cell-complex ${\mathcal K}(\Gamma)$ is determined as follows:

\begin{itemize}
\item{} for each vertex $u\in V(\Gamma)$, take a $d$-simplex $\sigma(u)$ and label its vertices by $\Delta_d$;

\item{} corresponding to each edge of color $j$ between $u,v\in V(\Gamma)$, identify the ($d-1$)-faces of $\sigma(u)$ and $\sigma(v)$ opposite to $j$-labeled vertices such that the vertices with same label coincide.
\end{itemize}

 The geometric carrier $|\mathcal{K}(\Gamma)|$ is a $d$-pseudomanifold and $(\Gamma,\gamma)$ is said to be a gem (graph encoded manifold) of any $d$-pseudomanifold homeomorphic to $|\mathcal{K}(\Gamma)|$ or simply is said to represent the $d$-pseudomanifold. We refer to \cite{bj84} for CW-complexes and related notions. It is known via the construction that for $\mathcal{B} \subset \Delta_d$ of cardinality $h+1$, ${\mathcal K}(\Gamma)$ has as many $h$-simplices with vertices labeled by $\mathcal{B}$ as many connected components of $\Gamma_{\Delta_d \setminus \mathcal{B}}$ are (cf. \cite{fgg86}).

 \smallskip
For a $k$-simplex $\lambda$ of ${\mathcal K}(\Gamma)$, $0 \leq k \leq d$, the  star of $\lambda$ in ${\mathcal K}(\Gamma)$ is the pseudocomplex obtained by
taking the $d$-simplices of ${\mathcal K}(\Gamma)$ which contain $\lambda$ and identifying only their
$(d-1)$-faces containing $\lambda$ as per gluings in ${\mathcal K}(\Gamma)$. The link of $\lambda$ in ${\mathcal K}(\Gamma)$ is the subcomplex of its star obtained
by the simplices that do not contain $\lambda$.
It is known that the $|{\mathcal K}(\Gamma_{\hat{c}})|$ is homeomorphic to the link of vertex $c$ of ${\mathcal K}(\Gamma)$ in the first barycentric subdivision of ${\mathcal K}(\Gamma)$. And from the correspondence between $(d+1)$-regular colored graphs and $d$-pseudomanifolds, we have that $|\mathcal{K}(\Gamma)|$ is a closed connected PL $d$-manifold if and only if for each $c\in \Delta_d$, $\Gamma_{\hat{c}}$ represents $\mathbb{S}^{d-1}$.

 \begin{definition}
A $(d+1)$-colored graph  $(\Gamma,\gamma)$ which is a gem of a closed $d$-manifold $M$ is called a {\it crystallization} of $M$ if it is contracted.\\
In this case, there are exactly $d+1$ number of vertices in the corresponding colored triangulation. 
\end{definition}

The initial point of the crystallization theory is the Pezzana's existence theorem (cf. \cite{pe74}) which gives existence of a crystallization for a closed connected PL $d$-manifold. 
%\textcolor{red}{Later, it has been extended to the boundary case (cf \cite{cg80, ga83}). Further, the existence of crystallizations has been extended for singular (PL) $d$-manifolds (cf. \cite{ccg17}). }

%\begin{remark} [\cite{cc19}]\label{remark:singularcorres} {\rm There is a bijection between the class of connected singular (PL) $d$-manifolds and the class of connected closed (PL) $d$-manifolds union with the class of connected compact (PL) $d$-manifolds with non-spherical boundary components. For, if $M$ is a singular $d$-manifold then removing small open neighbourhood of each of its singular vertices (if possible), a compact $d$-manifold $\check{M}$ (with non spherical boundary components) is obtained. It is obvious that $M=\check{M}$ if and only if $M$ is a closed $d$-manifold.
 
% Conversely, If $M$ is a compact $d$-manifold with non spherical boundary components then a singular $d$-manifold $\hat{M}$ is obtained by coning off each component of $\partial M$. If $M$ is a closed $d$-manifold then $M= \hat{M}$.}
 %\end{remark}
 
%If the boundary of connected compact PL $4$-manifold $M$ is connected then, by a graph representing $M$ we mean the graph representing its corresponding singular manifold $\hat{M}$ obtained from $M$ by caping off the boundary $\partial M$ with a cone. Thus, for  connected boundary case,  we need the colored graphs representing singular manifolds with at most one singular color throughout the paper and without loss of generality we will assume $4$ as its singular color.

\subsection{Regular Genus of closed PL $d$-manifolds}\label{sec:genus}

 In \cite{g81}, the author extended the notion of genus to arbitrary dimension as regular genus. Roughly, if $(\Gamma,\gamma)\in \mathbb{G}_d$ is a bipartite (resp. non bipartite) $(d+1)$-regular colored graph which represents a closed connected PL $d$-manifold $M$ then for each cyclic permutation $\varepsilon=(\varepsilon_0,\dots,\varepsilon_d)$ of $\Delta_d$, there exists a regular imbedding of $\Gamma$ into an orientable (resp. non orientable) surface $F_\varepsilon$.
 Moreover, the Euler characteristic $\chi_\varepsilon(\Gamma)$ of $F_ \varepsilon$ satisfies
 $$\chi_\varepsilon(\Gamma)=\sum_{i \in \mathbb{Z}_{d+1}}g_{\varepsilon_i\varepsilon_{i+1}} + (1-d)p.$$ 
and the genus (resp. half of genus) $\rho_ \varepsilon$ of $F_ \varepsilon$ satisfies
$$\rho_ \varepsilon(\Gamma)=1-\frac{\chi_\varepsilon(\Gamma)}{2}$$ 
where $2p$ is the total number of vertices of $\Gamma.$

\smallskip
The regular genus $\rho(\Gamma)$ of $(\Gamma,\gamma)$ is defined as
$$\rho(\Gamma)= \min \{\rho_{\varepsilon}(\Gamma) \ | \  \varepsilon =(\varepsilon_0,\varepsilon_1,\dots,\varepsilon_d )\ \mbox{ is a cyclic permutation of } \ \Delta_d\}.$$

\smallskip
The regular genus of $M$ is defined as 
 $$\mathcal G(M) = \min \{\rho(\Gamma) \ | \  (\Gamma,\gamma)\in \mathbb{G}_d \mbox{ is a crystallization which represents } M\}.$$
%Similar steps are followed for singular $d$-manifolds. So, on the same lines  the definition of regular genus of graphs representing singular and closed $d$-manifold is formulated as follows:
%\begin{definition}
%The regular genus $\rho(\Gamma)$ of $(\Gamma,\gamma)$ is the minimum genus (resp. half of genus) of an orientable (resp.
%non-orientable) surface into which  $(\Gamma,\gamma)$ embeds regularly.
%\end{definition}

%\begin{definition}
%The regular genus $\mathcal G(M)$ of a connected singular and closed $d$-manifold $M$ is defined as the least regular genus of its crystallizations.
%\end{definition}

\noindent We need the concept of regular genus of graphs $\rho(\Gamma)$  throughout the paper. We will  use the result  by Montesinos (\cite{a79}) which ensures that the $3$-handles (if any) and the $4$-handle are added in a unique way to obtain the closed $4$-manifold. It is known that each closed, orientable PL $4$-manifold $M$ admits a handle
presentation $M= H_0 \cup p H_1 \cup q H_2 \cup r H_3 \cup H_4 $. From \cite{a79}, we get that $M$ is
uniquely deterrmined by the cobordism between $\partial (H_0 \cup p H_1)$ and $\partial(H_0 \cup p H_1 \cup q H_2)$,
defined by the 2-handles $q H_2$. 
\begin{proposition}[\cite{a79}]\label{prop:unique}
Each closed, orientable 4-manifold with handle presentation 
$$M= H^{(0)}\cup \left( \cup _{p}H^{(1)}_{p}  \right) \cup \left( \cup_{q}H^{(2)}_{q} \right)\cup \left( \cup_{r}H^{(3)}_{r} \right) \cup H^{(4)}.$$
 is completely deterimined
by  $H^{(0)}\cup \left( \cup _{p}H^{(1)}_{p}  \right) \cup \left( \cup_{q}H^{(2)}_{q} \right)$.
\end{proposition}

\section{Semi simple crystallizations of closed 4-manifolds}\label{section:closed}

In \cite{bc17}, semi-simple crystallizations of closed $4$-manifolds have been introduced and they are  minimal with respect to regular genus among the graphs representing the same manifold. The notion of semi-simple crystallizations is generalisation of the  simple crystallizations of closed simply-connected $4$-manifolds (see \cite{bs16}).
\begin{definition}
Let $M$ be a closed $4$-manifold. A $5$-colored graph $\Gamma$ representing $M$ is called semi-simple if $g_{ijk}=  m + 1$ $ \forall i, j, k \in \Delta_4$, where $m$ is the rank of fundamental group of $M$. In other words, the $1$-skeleton of the associated colored triangulation contains exactly $m+1$ number of $1$-simplices for each pair of $0$-simplices.
\end{definition}

From \cite{c92}, we have the following result on the number of components of crystallization representing closed $4$-manifolds and a relation between Euler characteristic and regular genus of crystallizations.
\begin{proposition}[\cite{c92}]
Let $M$ be a closed $4$-manifold and $(\Gamma,\gamma)$ be a crystallization of $M$. Then 
\begin{align}
g_{j-1,j+1}&=g_{j-1,j,j+1}+\rho-\rho_{\hat{j}} \mbox{   } \forall j \in \Delta_4,\label{eqn:grho1}\\ 
g_{\hat{j-1}\hat{j+1}}&=1+\rho-\rho_{\hat{j-1}}-\rho_{\hat{j+1}}\mbox{  }\forall j \in \Delta_4,\label{eqn:grho2}
\end{align}
and
\begin{equation}
\chi(M)=2-2\rho+\sum_{i \in \Delta_4} \rho_{\hat{i}},\label{eqn:chi}
\end{equation}
where  $\rho$ and $\rho_{\hat{i}}$ denote the regular genus of $
\Gamma$ and $\Gamma_{\hat{i}}$ respectively, and 
$\chi(M)$ is the Euler characteristic of $M$.
\end{proposition}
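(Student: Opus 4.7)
The plan is to derive (1) and (2) by Euler-characteristic bookkeeping on the surfaces into which $\Gamma$ and its color-restricted subgraphs embed regularly, and then to feed these identities into a direct face-count of the colored triangulation $K(\Gamma)$ to get (3). I would fix the cyclic permutation $\varepsilon=(0,1,2,3,4)$, write $2p=|V(\Gamma)|$ and $A=\sum_{i\in\Delta_4}g_{i,i+1}$ for the sum of $\varepsilon$-consecutive bigon counts, and apply the bigon-counting Euler formula from Subsection~\ref{sec:genus}. Connectedness of $\Gamma$ gives $2-2\rho=-3p+A$; the same formula applied to $\Gamma_{\hat j}$ with the induced cyclic permutation on $\Delta_4\setminus\{j\}$---still connected because $\Gamma$ is contracted---reads $2-2\rho_{\hat j}=-2p+A-g_{j-1,j}-g_{j,j+1}+g_{j-1,j+1}$, since removing $j$ replaces the two bigons incident to $j$ by the single new bigon $\{j-1,j+1\}$. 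Subtracting the two identities produces the basic relation
\begin{equation*}
2(\rho-\rho_{\hat j}) \;=\; p - g_{j-1,j} - g_{j,j+1} + g_{j-1,j+1}.
\end{equation*}

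To upgrade this to (1) I need a second independent equation in the same quantities, supplied by the structural fact that each connected component of the three-color subgraph $\Gamma_{\{j-1,j,j+1\}}$ is a gem of $\mathbb{S}^2$: its components are in bijection with the $1$-simplices of $K(\Gamma)$ whose labels form the complementary pair $\{j-2,j+2\}$, and in the PL $4$-manifold $M$ every edge-link is homeomorphic to $\mathbb{S}^2$. The Euler formula for this $3$-regular $3$-colored subgraph (each component contributing $\chi=2$) is $2g_{j-1,j,j+1}=-p+g_{j-1,j}+g_{j,j+1}+g_{j-1,j+1}$. Subtracting this from the basic relation isolates $\rho-\rho_{\hat j}=g_{j-1,j,j+1}+p-g_{j-1,j}-g_{j,j+1}$, and plugging back yields exactly $g_{j-1,j+1}=g_{j-1,j,j+1}+\rho-\rho_{\hat j}$, which is (1).

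For (2) I apply the same idea to $\Gamma_{\{j-2,j,j+2\}}$, whose components correspond to $1$-simplices of $K(\Gamma)$ labeled $\{j-1,j+1\}$ and are again gems of $\mathbb{S}^2$; this gives $2g_{\hat{j-1}\hat{j+1}}=-p+g_{j-2,j}+g_{j,j+2}+g_{j-2,j+2}$. I then eliminate $g_{j-2,j}$ and $g_{j,j+2}$ via the Euler identities for $\Gamma_{\hat{j-1}}$ and $\Gamma_{\hat{j+1}}$ (both connected and representing $\mathbb{S}^3$), noting that $\{j-2,j+2\}$ is itself $\varepsilon$-consecutive so that the remaining four consecutive bigons sum to $A-g_{j-2,j+2}$; after substituting $A=2+3p-2\rho$ the expression collapses to $2g_{\hat{j-1}\hat{j+1}}=2+2\rho-2\rho_{\hat{j-1}}-2\rho_{\hat{j+1}}$, which is (2). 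For (3) I enumerate the faces of $K(\Gamma)$: $f_0=5$ since $\Gamma$ is contracted, $f_4=2p$, $f_3=5p$, while $f_2=\sum_{\{i,j\}}g_{ij}$ and $f_1=\sum_{\{i,j\}}g_{\hat{i}\hat{j}}$, both sums running over the ten unordered pairs $\{i,j\}\subset\Delta_4$. Splitting these pairs into the five $\varepsilon$-consecutive ones (whose complementary triples are three consecutive colors, handled via (1)) and the five at cyclic distance $2$ (handled by (2)), summing over $j$ telescopes the $\rho_{\hat i}$ contributions; substituting $A=2+3p-2\rho$ into $\chi(M)=\sum_i(-1)^i f_i$ then collapses everything to $\chi(M)=2-2\rho+\sum_{i\in\Delta_4}\rho_{\hat i}$.

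The main obstacle I anticipate is the structural input that each component of $\Gamma_{\{j-1,j,j+1\}}$ and of $\Gamma_{\{j-2,j,j+2\}}$ is a gem of $\mathbb{S}^2$. This is where the PL $4$-manifold hypothesis actually enters the argument, via the (standard but non-trivial) correspondence between three-color subgraphs and links of $1$-simplices in $K(\Gamma)$, combined with the fact that edge-links in a closed PL $4$-manifold are $2$-spheres. Once this is in hand, the remainder is mechanical manipulation of the bigon counts.
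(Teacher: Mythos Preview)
The paper does not prove this proposition: it is quoted verbatim from \cite{c92} and used as a black box, so there is no ``paper's own proof'' to compare against. That said, your argument is a correct and essentially standard derivation of these identities; the bigon-counting Euler formulas for $\Gamma$, $\Gamma_{\hat j}$, and the three-color subgraphs, combined with the $\mathbb{S}^2$-link input you isolate, are exactly the ingredients used in the original source. Your computation for (3) also goes through cleanly once one notes (as you implicitly do) that the ten unordered pairs in $\Delta_4$ split into the five $\varepsilon$-consecutive pairs and the five distance-two pairs, and that the common term $\sum_j g_{j-1,j,j+1}$ cancels between $f_1$ and $f_2$.

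One notational caveat worth making explicit: the identities hold for a \emph{fixed} cyclic permutation $\varepsilon$ and the induced permutation on $\Delta_4\setminus\{j\}$, i.e.\ with $\rho=\rho_\varepsilon(\Gamma)$ and $\rho_{\hat j}=\rho_{\varepsilon_{\hat j}}(\Gamma_{\hat j})$. The paper's phrasing ``the regular genus of $\Gamma$ and $\Gamma_{\hat i}$'' is the customary abuse of notation in this literature (one works with a permutation realizing the minimum and its restrictions); your choice to fix $\varepsilon=(0,1,2,3,4)$ at the outset is the right way to make this precise, and you should say explicitly that $\rho_{\hat j}$ is taken with respect to the induced permutation rather than as an independent minimum.
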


\begin{lemma}\label{lemma:ori}
Let $M$ be a closed connected orientable $4$-manifold. Let $(\Gamma,\gamma)$ be a $5$-colored semi-simple crystallization for $M$. Let $\beta_i(M)$ denotes the $i^{th}$ Betti number of manifold $M$ with $\mathbb{Z}$ coefficients.
 Then  $ g_{j-1,j+1} =4m+\beta_2-2\beta_1+1$, $\forall j \in \Delta_4$. 
%\begin{align*}
%&\mbox{ M orientable }\Rightarrow  g_{j-1,j+1} =4m+\beta_2-2\beta_1+1,\\
%&\mbox{ M non-orientable }\Rightarrow  g_{j-1,j+1}=4m+\beta_2-2\beta_1. 
%\end{align*}
\end{lemma}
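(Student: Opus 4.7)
The plan is to combine the semi-simple hypothesis with the two identities (\ref{eqn:grho1}) and (\ref{eqn:grho2}), together with the Euler characteristic formula (\ref{eqn:chi}) and Poincar\'e duality for a closed connected orientable $4$-manifold.

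First I unpack the symbol $g_{\hat{j-1}\hat{j+1}}$ appearing in (\ref{eqn:grho2}): it counts the connected components of the $3$-colored subgraph $\Gamma_{\Delta_4 \setminus \{j-1,j+1\}}$, which, with indices taken mod $5$, equals $\Gamma_{\{j-2,\,j,\,j+2\}}$. These three colors are distinct, so the semi-simple hypothesis $g_{ijk}=m+1$ applies and gives $g_{\hat{j-1}\hat{j+1}} = m+1$. Plugging into (\ref{eqn:grho2}) produces
\[
\rho_{\hat{j-1}} + \rho_{\hat{j+1}} \; = \; \rho - m \qquad \text{for every } j \in \Delta_4.
\]
Since $|\Delta_4|=5$ is odd, these five identities close up into a cycle $\rho_{\hat 0}+\rho_{\hat 2} = \rho_{\hat 2}+\rho_{\hat 4} = \rho_{\hat 4}+\rho_{\hat 1} = \rho_{\hat 1}+\rho_{\hat 3} = \rho_{\hat 3}+\rho_{\hat 0}$, and comparing adjacent pairs forces all five values $\rho_{\hat j}$ to coincide with a single $\rho'$ satisfying $2\rho' = \rho - m$.

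Next I invoke Poincar\'e duality: for a closed connected orientable $4$-manifold one has $\beta_0(M)=\beta_4(M)=1$ and $\beta_1(M)=\beta_3(M)$, so $\chi(M) = 2 - 2\beta_1 + \beta_2$. Inserting $\rho_{\hat i}=\rho'=(\rho-m)/2$ into (\ref{eqn:chi}) gives the single linear equation
\[
2 - 2\beta_1 + \beta_2 \; = \; 2 - 2\rho + 5\rho' \; = \; 2 - 2\rho + \tfrac{5}{2}(\rho - m),
\]
which I solve to obtain $\rho = 5m + 2\beta_2 - 4\beta_1$ and $\rho' = 2m + \beta_2 - 2\beta_1$. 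Substituting these, together with the semi-simple identity $g_{j-1,j,j+1}=m+1$, into (\ref{eqn:grho1}) yields
\[
g_{j-1,j+1} \; = \; (m+1) + \rho - \rho_{\hat j} \; = \; (m+1) + 3m + \beta_2 - 2\beta_1 \; = \; 4m + \beta_2 - 2\beta_1 + 1,
\]
independent of $j$, which is the claimed formula.

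The main obstacle is essentially the first step: recognising that the $3$-colored subgraph appearing in (\ref{eqn:grho2}) has exactly three distinct colors so that the semi-simple hypothesis applies, and then noticing that the resulting cyclic system on the five variables $\rho_{\hat j}$ with constant ``skip-one'' sums collapses to a common value precisely because $5$ is odd. Once this collapse is in hand, the remainder reduces to algebraic manipulation of the identities in the proposition attributed to Cavicchioli, and the orientability assumption enters only through Poincar\'e duality to produce the clean expression $\chi(M)=2-2\beta_1+\beta_2$.
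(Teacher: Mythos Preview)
Your proof is correct and follows essentially the same approach as the paper: use the semi-simple hypothesis in (\ref{eqn:grho2}) to see that all $\rho_{\hat j}$ coincide, combine with (\ref{eqn:chi}) and Poincar\'e duality to express the common value in terms of $m,\beta_1,\beta_2$, and substitute into (\ref{eqn:grho1}). The only cosmetic difference is that the paper expresses the intermediate relation as $\rho_{\hat 0}=\chi(M)-2+2m$ and then writes $g_{14}=4m+\chi(M)-1$ before inserting $\chi(M)=2-2\beta_1+\beta_2$, whereas you solve directly for $\rho$ and $\rho'$ in terms of the Betti numbers; the algebra is equivalent.
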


\begin{proof}
Let $(\Gamma,\gamma)$ be a semi-simple crystallization representing $M$. 
From  Equation \eqref{eqn:grho2}, for $j=k, k+2 \mbox{ (mod }5)$, we get $\rho_{\hat{k-1}}=\rho_{\hat{k+3}}$. This is true for each $k \in \Delta_4$ which implies $\rho_{\hat{i}}=\rho_{\hat{0}} \mbox{  } \forall i \in \Delta_4.$ 
Then, by adding all the equations in \eqref{eqn:grho2} for each $j \in \Delta_4$, we have 
$$5m=5\rho-10\rho_{\hat{0}} \Rightarrow \rho=m+2\rho_{\hat{0}}.$$
From Equation \eqref{eqn:chi}, $\chi(M)=2-2\rho+5\rho_{\hat{0}}$. This implies $\chi(M)=2-2m+\rho_{\hat{0}}.$ Further, from Equation \eqref{eqn:grho1} for $j=0$, we have $g_{14}=2m+\rho_{\hat{0}}+1$ And $g_{14}=4m+\chi(M)-1$.
It follows from Poincaré  duality that $\chi(M)=2+\beta_2-2\beta_1$.
This follows the result.
\end{proof}

\bigskip
From now onwards, we particularly take the manifolds admitting semi-simple crystallizations and with fundamental group $\mathbb{Z}$. This implies, $\beta_1=1$, $m=1$ and $g_{ijk}=2$. It follows from Lemma \ref{lemma:ori} that $g_{14}=\beta_2+3$. 

\bigskip
It is known that every closed $4$-manifold $M$ admits a handle decomposition, i.e.,
$$M=H^{(0)}\cup (H^{(1)}_1 \cup \dots \cup H^{(1)}_{d_1} ) \cup (H^{(2)}_1\cup \dots \cup H^{(2)}_{d_2} ) \cup (H^{(3)}_1\cup \dots \cup H^{(3)}_{d_3} ) \cup H^{(4)},$$
where $H^{(0)}= \mathbb{D}^4$ and each $k$-handle $H^{(k)}_i=\mathbb{D}^k \times 
\mathbb{D}^{4-k}$ (for $1 \leq k \leq 4$, $1 \leq i \leq d_k$), is attached with 
a map $f^{(k)}_i:\partial \mathbb{D}^k \times \mathbb{D}^{4-k} \rightarrow 
\partial (H^{(0)}\cup \dots \cup (H^{(k-1)}_1 \cup \dots \cup H^{(k-1)}_{d_{k-1}}))$.

\bigskip
The following arguement is known in Crystallization theory (See  \cite{g81}). Let $(\Gamma,\gamma)$ be a crystallization of a closed PL $4$-manifold $M$ and $\mathcal{K}(\Gamma)$ be the corresponding triangulation with the vertex set $\Delta_4$. If $B \subset \Delta_4$, then $\mathcal{K}(B)$ denotes the subcomplex of $\mathcal{K}(\Gamma)$ generated by the vertices $i \in B$. Let Sd $\mathcal{K}(\Gamma)$ be the first barycentric subdivision of $\mathcal{K}(\Gamma)$ and $F(i,j)$ (resp. $F(i,j,k)$) be the largest subcomplex of Sd $\mathcal{K}(\Gamma)$, disjoint from Sd $\mathcal{K}(i,j)$ $\cup$ Sd $\mathcal{K}(\Delta_4 \setminus \{i,j\})$ (resp. Sd $\mathcal{K}(i,j,k)$ $\cup$ Sd $\mathcal{K}(\Delta_4 \setminus \{i,j,k\})$). Then, $F(i,j)$ (resp. $F(i,j,k)$) partitions $\mathcal{K}(\Gamma)$ into two subcomplexes $N(i,j)$ and $N(\Delta_4 \setminus \{i,j\})$ $\mbox{ (resp. } N(i,j,k)$ and $N(\Delta_4 \setminus \{i,j,k\})$ )  with $F(i,j)$ (resp. $F(i,j,k)$) as common boundary. Moreover, the polyhedron $|F(i,j)|$ (resp. $|F(i,j,k)|$) is a closed $3$-manifold and $|N(i,j)|$ (resp. $|N(i,j,k)|$) is a regular neighbourhood of the subcomplex $|\mathcal{K}(i,j)| \mbox{ (resp. } |\mathcal{K}(i,j,k) )| \mbox{ in } |\mathcal{K}(\Gamma)|$. Thus, $M$ has a decomposition of type $M=N(i,j)\cup_\phi N(\Delta_4 \setminus \{i,j\})
$, where $\phi$ is a boundary identification.

%\smallskip
%By using Universal Coefficient Theorem, we have the following result.
%\begin{proposition}\label{2prop:allen}
%Let $N$ be a connected singular $d$-manifold. If $N$ is orientable then the torsion %subgroup of
%$H_{n-1}(N)$ is trivial.  If $N$ is non-orientable then  the torsion subgroup of
%$H_{n-1}(N)$ is $\mathbb{Z}_2$.
%\end{proposition}

\begin{remark}\label{remark:oriM}
{\rm
Let $M$ be a closed connected orientable $4$-manifold with  fundamental group $\mathbb{Z}$ and which admits semi simple crystallization $\Gamma$. This implies $g_{ijk}=2$    $\forall i,j,k$ in $\Gamma$ and thus number of $\{ij\}$-colored edges in $\mathcal{K}(\Gamma)$ is $2$. Without loss of generality, we write $M=N(1,4) \cup N(0,2,3)$. We denote  $N(1,4)$ and $ N(0,2,3)$ by $V$ and $V^\prime$ respectively. Since number of $\{14\}$-colored edges is $2$, $V$ is either $\mathbb{S}^1 \times \mathbb{B}^3$ or $\mathbb{S}^1 \ttimes \mathbb{B}^3$, where $\mathbb{S}^1 \times \mathbb{B}^3$ and $\mathbb{S}^1 \ttimes \mathbb{B}^3$ denote direct and twisted product of spaces $\mathbb{S}^1$, $\mathbb{B}^3$ respectively.  From Mayer Vietoris exact sequence of the triples $(M,V,V^\prime)$, we have 
$$0 \rightarrow H_4(M)\rightarrow H_3(\partial V)\rightarrow 0.$$
This implies $M$ is orientable if and only if $\partial V$ is orientable. Thus, $V=\mathbb{S}^1 \times \mathbb{B}^3$ and $\partial V=\mathbb{S}^1 \times \mathbb{S}^2$.}
\end{remark}

\begin{lemma}
Let $M$, $V$ and $V^\prime$ be the spaces as in remark \ref{remark:oriM}. Then 
\begin{equation}\label{eqn:beta21}
\beta_2(V^\prime)-\beta_2(M)-\beta_1(V^\prime)+1=0.
\end{equation}
\end{lemma}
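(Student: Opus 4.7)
The plan is to derive the identity from the Euler characteristics of the pieces in the decomposition $M = V \cup V'$, using the fact that $V \cap V'$ is (homotopy equivalent to) the common boundary $\partial V = \mathbb{S}^1 \times \mathbb{S}^2$ already identified in Remark \ref{remark:oriM}. The Mayer--Vietoris long exact sequence for this triple gives the standard additivity
\[
\chi(M) \;=\; \chi(V) + \chi(V') - \chi(V \cap V'),
\]
so the first step is to compute each term on the right.

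Next, I would observe that $\chi(V) = \chi(\mathbb{S}^1 \times \mathbb{B}^3) = 0$ and $\chi(V \cap V') = \chi(\mathbb{S}^1 \times \mathbb{S}^2) = 0$ (both factors contain an $\mathbb{S}^1$), reducing the identity to $\chi(M) = \chi(V')$. For the left-hand side, since $\pi_1(M) \cong \mathbb{Z}$ we have $\beta_1(M) = 1$, and by Poincar\'e duality on the closed orientable $4$-manifold $M$ one gets $\beta_0(M) = \beta_4(M) = 1$ and $\beta_3(M) = \beta_1(M) = 1$. Therefore
\[
\chi(M) \;=\; 1 - 1 + \beta_2(M) - 1 + 1 \;=\; \beta_2(M).
\]

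For the right-hand side, the key point is that $V' = N(0,2,3)$ is a regular neighbourhood of the subcomplex $|\mathcal{K}(0,2,3)|$, and this subcomplex has dimension at most $2$ because it is generated by only three vertex labels. Hence $V'$ deformation retracts onto a $2$-dimensional complex, giving $H_k(V') = 0$ for $k \geq 3$. Combined with the fact that $V'$ is connected (which follows from the contractedness of $(\Gamma,\gamma)$, ensuring $\Gamma_{\widehat{1}\widehat{4}}$ is connected), we obtain
\[
\chi(V') \;=\; 1 - \beta_1(V') + \beta_2(V').
\]

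Equating the two expressions yields $\beta_2(M) = 1 - \beta_1(V') + \beta_2(V')$, which is precisely equation \eqref{eqn:beta21} after rearrangement. I expect the only subtle point to be justifying the collapse of $V'$ onto a $2$-complex rigorously; everything else is bookkeeping with $\chi$ and Poincar\'e duality. No direct manipulation of the Mayer--Vietoris sequence term by term should be needed once additivity of $\chi$ is invoked.
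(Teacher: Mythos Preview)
Your argument is correct. Both you and the paper rely on the same three ingredients: that $V'$ collapses onto the $2$-complex $|\mathcal K(0,2,3)|$ (so its homology vanishes above degree $2$), Poincar\'e duality for the closed orientable $M$ with $\pi_1(M)\cong\mathbb{Z}$, and the known homotopy types $V\simeq\mathbb{S}^1\times\mathbb{B}^3$, $\partial V\cong\mathbb{S}^1\times\mathbb{S}^2$ from Remark~\ref{remark:oriM}. The difference is purely in packaging: the paper writes out the Mayer--Vietoris long exact sequence for $(M,V,V')$, computes each group explicitly, and then invokes the vanishing of the alternating sum of ranks; you compress that last step into the single identity $\chi(M)=\chi(V)+\chi(V')-\chi(\partial V)$ and read off the relation immediately from $\chi(V)=\chi(\partial V)=0$. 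Your route is shorter and avoids naming the individual maps, at the cost of hiding exactly where each hypothesis enters; the paper's version makes the role of $H_3(M)\cong\mathbb{Z}$ and the vanishing of $H_2(V)$ more visible, which is useful later when the boundary case (Lemma after Remark~\ref{2lemma:oriM}) requires tracking extra $H_*(\partial M)$ summands. One minor remark: your justification for the connectedness of $V'$ via $\Gamma_{\widehat{1}\widehat{4}}$ is not quite the right graph-theoretic translation, but connectedness of $|\mathcal K(0,2,3)|$ is immediate anyway since the crystallization has a single vertex of each colour and $g_{14}\geq 1$ provides a triangle containing all three.
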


\begin{proof}
Since $V^\prime$ collapses onto the $2$-dimensional complex $|\mathcal{K}(0,2,3)|$, the Mayer Vietoris sequence of the triple $(M,V,V^\prime)$ gives the following long exact sequence.
$$0 \longrightarrow H_3(M) \longrightarrow H_2(\partial V)
\longrightarrow H_2(V) \oplus H_2(V^\prime) \longrightarrow H_2(M)\longrightarrow H_1(\partial V)$$
$\vspace{-0.8cm}$

$\hspace{12.5cm}\xdownarrow{0.3cm}$

$\hspace{8.25cm} 0\longleftarrow H_1(M) \longleftarrow H_1(V) \oplus H_1(V^\prime)$\\

\smallskip
By assumption  $\pi_1(M)\cong \mathbb{Z}$ which implies  $H_1(M)\cong \mathbb{Z}$. By Poincaré  duality and Universal Coefficient theorem, $H_3(M)\cong H^1(M) \cong FH_1(M) \cong \mathbb{Z}$. Remark \ref{remark:oriM} gives $V=\mathbb{S}^1 \times \mathbb{B}^3$ and $\partial V=\mathbb{S}^1 \times \mathbb{S}^2$.  Now, $H_2(\partial V)\cong H_1(\partial V)\cong \mathbb{Z}$, $H_2(V)\cong 0$ and  $H_1(V)\cong Z.$ Thus above exact sequence reduces to
$$0 \longrightarrow \mathbb{Z} \longrightarrow \mathbb{Z} \longrightarrow H_2(V^\prime) \longrightarrow H_2(M) \longrightarrow \mathbb{Z}\longrightarrow \mathbb{Z} \oplus H_1(V^\prime) \longrightarrow \mathbb{Z} \longrightarrow 0.$$
Since the alternate sum of the rank of finitely generated abelian groups in an exact sequence is zero, the result follows.
\end{proof}

\begin{lemma}\label{lemma:rk1}
Let $M$, $V$ and $V^\prime$ be as in remark \ref{remark:oriM}, where $V$ and $V^\prime$ are regular neighbourhoods $N(1,4)$ and $N(0,2,3)$ respectively. Let $\pi_1(M)=\mathbb{Z}$. Then the the fundamental group of $V^\prime$ is neither trivial nor $\mathbb{Z}_k$ for any $k$.
\end{lemma}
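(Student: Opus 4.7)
The natural approach is Van Kampen's theorem applied to the decomposition $M = V \cup V'$. Choose slight open thickenings $\widetilde V$ and $\widetilde V'$ of the regular neighbourhoods $V$ and $V'$ along collars of the common boundary $\partial V = \partial V'$, so that $\widetilde V$ deformation retracts to $V$, $\widetilde V'$ to $V'$, and $\widetilde V \cap \widetilde V'$ to $\partial V$. Since $\partial V = \mathbb{S}^1 \times \mathbb{S}^2$ is path-connected (by Remark \ref{remark:oriM}), Van Kampen gives the pushout
\[
\pi_1(M) \;\cong\; \pi_1(V) *_{\pi_1(\partial V)} \pi_1(V').
\]

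The plan is to feed in what we know from Remark \ref{remark:oriM}: $V = \mathbb{S}^1 \times \mathbb{B}^3$, so $\pi_1(V) = \mathbb{Z} = \langle x\rangle$, and $\partial V = \mathbb{S}^1 \times \mathbb{S}^2$, so $\pi_1(\partial V) = \mathbb{Z} = \langle t\rangle$; moreover the inclusion $\partial V \hookrightarrow V$ is (up to homotopy) the retraction to $\mathbb{S}^1 \times \{*\}$, hence induces an \emph{isomorphism} $t \mapsto x$ on fundamental groups. This is the key structural input and makes the amalgamation collapse nicely.

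Now I would argue by contradiction. First, if $\pi_1(V') = 1$, then the pushout reduces to $\pi_1(V)$ modulo the normal closure of the image of $\pi_1(\partial V)$, i.e., $\pi_1(M) \cong \mathbb{Z}/\langle x\rangle = 1$, contradicting $\pi_1(M) \cong \mathbb{Z}$. Second, if $\pi_1(V') \cong \mathbb{Z}_k = \langle y \mid y^k = 1\rangle$ for some $k \geq 2$, then the image of the generator $t$ of $\pi_1(\partial V)$ in $\pi_1(V')$ is some element $y^a$, and
\[
\pi_1(M) \;\cong\; \langle x, y \mid y^k = 1,\; x = y^a\rangle \;\cong\; \mathbb{Z}_k,
\]
a finite group, again contradicting $\pi_1(M) \cong \mathbb{Z}$. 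The two cases together rule out $\pi_1(V')$ being trivial or finite cyclic.

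The argument is essentially a single application of Van Kampen, so there is no real obstacle beyond verifying the set-up. The one point that deserves care is justifying that the inclusion $\partial V \hookrightarrow V$ is $\pi_1$-surjective (in fact an isomorphism); this is immediate from the product structure $V = \mathbb{S}^1 \times \mathbb{B}^3$ guaranteed by Remark \ref{remark:oriM}, which is why the orientability hypothesis was needed to exclude the twisted case $\mathbb{S}^1 \,\widetilde\times\, \mathbb{B}^3$ (where the same conclusion would still hold, but via a slightly different computation).
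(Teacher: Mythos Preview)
Your proof is correct and follows essentially the same approach as the paper: both apply Seifert--van Kampen to the decomposition $M=V\cup V'$, use that $V=\mathbb{S}^1\times\mathbb{B}^3$ and $\partial V=\mathbb{S}^1\times\mathbb{S}^2$ with the inclusion inducing an isomorphism on $\pi_1$, and derive a contradiction from the resulting presentation of $\pi_1(M)$. Your treatment of the $\mathbb{Z}_k$ case is slightly more explicit (tracking the image $y^a$ of the generator) than the paper's, but the argument is the same.
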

\begin{proof}
We have $M=V \cup V^\prime=(\mathbb{S}^1 \times \mathbb{B}^3) \cup V^\prime$ with $V \cap V^\prime = \partial V=\partial V^\prime$ from Remark \ref{remark:oriM}.  We can extend the spaces $V$ and $V^\prime$ by open simplices. Without loss of generality, we can assume that $V$ and $V^\prime$ in the given hypothesis are open. Let $i_1 : \pi_1(V \cap V^\prime)\rightarrow \pi_1(V)$ and $i_2: \pi_1(V \cap V^\prime)\rightarrow \pi_1(V^\prime)$ be the maps induced from inclusion maps $j_1: V\cap V^\prime \rightarrow V$ and $j_2: V\cap V^\prime \rightarrow V^\prime$ respectively. Since $V=\mathbb{S}^1 \times \mathbb{B}^3$ and $V\cap V^\prime = \mathbb{S}^1 \times \mathbb{S}^2$, if we let $\pi_1(V \cap V^\prime) = \langle \alpha \rangle$ then $\pi_1(V ) = \langle \alpha \rangle$ and $i_1(\alpha) = \alpha$.

If we assume to the contrary that $\pi_1(V^\prime)=\langle e \rangle \mbox{(or } \langle \beta | \beta^k \rangle)$ then Seifert-van Kampen Theorem implies $\pi_1(M)=\langle e \rangle \mbox{(or } \langle \beta | \beta^k \rangle)$ which is a contradiction as fundamental group of $M$ is $\mathbb{Z}$. Hence, the lemma  follows.
\end{proof}

\begin{lemma}\label{lemma:bound02}
Let $V$ and $V^\prime$ be as in remark \ref{remark:oriM}. Then, $0 \leq \beta_1(V^\prime) \leq 2.$
\end{lemma}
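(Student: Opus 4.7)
\emph{Proof plan.} The lower bound $\beta_1(V') \geq 0$ is automatic, so I focus on the upper bound. The plan is to bound $\beta_2(V')$ from above by a closer inspection of the Mayer--Vietoris sequence already used in the proof of Equation \eqref{eqn:beta21}, and then convert that bound on $\beta_2(V')$ into one on $\beta_1(V')$ by substituting back into Equation \eqref{eqn:beta21}.

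The Mayer--Vietoris sequence for $M = V \cup V'$, with $V \cap V' = \partial V \cong \mathbb{S}^1 \times \mathbb{S}^2$, contains the segment
\begin{equation*}
H_2(\partial V) \to H_2(V) \oplus H_2(V') \to H_2(M) \to H_1(\partial V) \to H_1(V) \oplus H_1(V').
\end{equation*}
By Remark \ref{remark:oriM}, $V = \mathbb{S}^1 \times \mathbb{B}^3 \simeq \mathbb{S}^1$, so $H_2(V) = 0$, and the inclusion $\partial V \hookrightarrow V$ is the identity on the $\mathbb{S}^1$-factor together with the standard $\mathbb{S}^2 \hookrightarrow \mathbb{B}^3$, hence induces an isomorphism $H_1(\partial V) \to H_1(V) \cong \mathbb{Z}$. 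I would use this to argue that the Mayer--Vietoris map $H_1(\partial V) \to H_1(V) \oplus H_1(V')$ is injective (its first component is already an isomorphism), so that the connecting map $H_2(M) \to H_1(\partial V)$ vanishes. Exactness then forces $H_2(V') \to H_2(M)$ to be surjective, while its kernel is the image of $H_2(\partial V) \cong \mathbb{Z}$ in $H_2(V')$ (using $H_2(V) = 0$), which has rank at most one. Therefore
$$\beta_2(V') \leq \beta_2(M) + 1.$$

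Substituting the identity $\beta_2(V') = \beta_2(M) + \beta_1(V') - 1$ from Equation \eqref{eqn:beta21} into the displayed inequality yields $\beta_1(V') \leq 2$, as required. The only delicate point is the identification of $H_1(\partial V) \to H_1(V)$ as an isomorphism, which is immediate from the product structure $\mathbb{S}^1 \times \mathbb{S}^2 \hookrightarrow \mathbb{S}^1 \times \mathbb{B}^3$; the remainder is a short diagram chase in the Mayer--Vietoris sequence, of essentially the same flavour as the calculation in the previous lemma.
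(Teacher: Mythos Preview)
Your argument is correct, but it follows a different route from the paper's. The paper combines Equation~\eqref{eqn:beta21} with Lemma~\ref{lemma:ori} (using $m=\beta_1(M)=1$) to obtain the combinatorial identity
\[
g_{14}=\beta_2(V')+4-\beta_1(V'),
\]
and then invokes the fact that in the $2$--complex $\mathcal{K}(0,2,3)$ every edge is the face of at least one triangle. Since that complex has exactly two edges between each pair of the three vertices $0,2,3$, this edge-covering condition forces the rank of the simplicial boundary map $\partial_2$ to be at least~$2$, whence $\beta_1(V')=4-\operatorname{rk}\partial_2\le 2$.

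Your approach instead sharpens the Mayer--Vietoris analysis already used for Equation~\eqref{eqn:beta21}: noting that the inclusion $\mathbb{S}^1\times\mathbb{S}^2\hookrightarrow\mathbb{S}^1\times\mathbb{B}^3$ induces an isomorphism on $H_1$, you deduce that $H_2(V')\to H_2(M)$ is onto with kernel of rank at most one, giving $\beta_2(V')\le\beta_2(M)+1$ and hence $\beta_1(V')\le 2$ via Equation~\eqref{eqn:beta21}. This is a genuinely topological argument that makes no use of the semi-simple crystallization hypothesis or of $g_{14}$; it applies to any splitting $M=V\cup V'$ with $V\cong\mathbb{S}^1\times\mathbb{B}^3$ and $V'$ homotopy equivalent to a $2$--complex. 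The paper's route, by contrast, produces the explicit formula $g_{14}=\beta_2(V')+4-k$, which is then reused in the case analysis of Theorem~\ref{theorem:main}.
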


\begin{proof}
If $\beta_1(V^\prime)=k$ then $g_{14}=\beta_2(V^\prime)+4-k$ using Equation \eqref{eqn:beta21} and Lemma \ref{lemma:ori} for orientable case. Since each edge is a face of at least one triangle, the result follows.
\end{proof}

\begin{proposition}[\cite{rs72}]\label{prop:roursander}
Let $M$ be a manifold and $X \subset \mbox{int } M$ be a polyhedron. If $X$ collapses onto $Y$ then a regular neighbourhood
of $X$ is PL-homeomorphic to a regular neighbourhood of $Y$.
\end{proposition}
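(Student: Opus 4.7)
The plan is to rely on the Regular Neighborhood Theorem of Rourke-Sanderson, which asserts that any two regular neighborhoods of a fixed compact polyhedron in $\mathrm{int}\, M$ are PL-homeomorphic (indeed, ambient isotopic in $M$). Given this uniqueness, it suffices to exhibit a single PL submanifold of $M$ that simultaneously satisfies the defining conditions of a regular neighborhood of $X$ and of $Y$.

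First I would reduce the collapse $X \searrow Y$ to a finite sequence of elementary collapses $X = X_0 \searrow X_1 \searrow \cdots \searrow X_n = Y$ and proceed by induction on $n$, so that we only need to handle one elementary collapse, say from $X$ to $X' = X \setminus (\mathrm{int}\,\sigma \cup \mathrm{int}\,\tau)$, where $\sigma$ is a free face of $\tau$ in some chosen triangulation. Next I would pick a triangulation $K$ of $M$ in which $X$ and $X'$ are both subcomplexes and the elementary collapse is simplicial, and form the derived neighborhood $N := N(X; K'')$ in the second barycentric subdivision of $K$. This $N$ is a canonical regular neighborhood of $X$, and by the standard derived-neighborhood argument $N \searrow X$.

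Composing this collapse with the hypothesized $X \searrow X'$ gives $N \searrow X'$. To promote $N$ into a regular neighborhood of $X'$, I would verify directly that (i) $N$ is a compact PL submanifold of $M$, (ii) $X' \subset \mathrm{int}\, N$, and (iii) $N$ PL-collapses onto $X'$. These are precisely the characterizing properties of a regular neighborhood, so the uniqueness theorem identifies $N$ with any other regular neighborhood of $X'$ up to PL-homeomorphism. Induction on the number of elementary collapses then yields the result for the general collapse $X \searrow Y$.

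The main obstacle I would expect is not conceptual but organizational: arranging compatible triangulations so that $X$, the elementary collapse, and the derived neighborhood of $X$ live simultaneously in a single simplicial framework, and justifying carefully that the derived neighborhood construction is preserved under each elementary collapse. Once these technicalities of PL bookkeeping are settled, the conceptual heart of the proof is simply that a regular neighborhood of $X$ already collapses to $Y$ and hence, by uniqueness, serves as a regular neighborhood of $Y$.
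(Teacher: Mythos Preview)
The paper does not actually prove this proposition; it is simply quoted from Rourke--Sanderson \cite{rs72} and used as a black box in the proof of Theorem~\ref{theorem:main}. Your sketch is the standard argument from that reference (uniqueness of regular neighborhoods plus induction on elementary collapses, showing a derived neighborhood of $X$ is already a regular neighborhood of $Y$), and it is correct.
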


\begin{theorem}\label{theorem:main}
	Let $M$ be a closed orientable $4$-manifold with fundamental group $\mathbb{Z}$ . Let $(\Gamma,\gamma)$ be a semi-simple crystallization representing $M$. Then, there exists a handle decomposition of $M$ as one of the following:
	\begin{enumerate}[$(1)$]
		\item  one $0$-handle, two $1$-handles, $1+\beta_2(M)$ $2$-handles, one $3$-handle and one $4$-handle,
		\item one $0$-handle, one $1$-handle, $\beta_2(M)$ $2$-handles, one $3$-handle and one $4$-handle.
	\end{enumerate}
\end{theorem}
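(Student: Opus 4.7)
The strategy is to exploit the regular-neighbourhood decomposition $M = V \cup V'$ from Remark~\ref{remark:oriM}, with $V = N(1,4) = \mathbb{S}^1 \times \mathbb{B}^3$ and $V' = N(0,2,3)$. On the $V$-side the handle structure is forced: one $0$-handle and one $1$-handle. Dually, when $V$ is reattached along the common boundary $\mathbb{S}^1 \times \mathbb{S}^2$, it contributes exactly one $3$-handle and one $4$-handle to $M$, attached uniquely by the Laudenbach--Poenaru / Montesinos result recalled in Section~\ref{sec:genus}. Thus the problem reduces to producing a handle decomposition of $V'$ consisting of one $0$-handle, $k$ $1$-handles, and $(\beta_2(M) + k - 1)$ $2$-handles for $k \in \{1,2\}$.

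To build such a decomposition, I start from the simplicial $2$-complex $\mathcal{K}(0,2,3)$, onto which $V'$ collapses, so by Proposition~\ref{prop:roursander}, $V'$ is PL-homeomorphic to a regular neighbourhood of $\mathcal{K}(0,2,3)$. The semi-simple hypothesis (with $m = 1$, giving $g_{ijk} = 2$) together with the equality $g_{14} = \beta_2(M) + 3$ from Lemma~\ref{lemma:ori} pin down the cell counts: three vertices, six $1$-cells (two between each pair of vertices), and $\beta_2(M) + 3$ triangles. This yields a handle decomposition of $V'$ with three $0$-handles, six $1$-handles, and $\beta_2(M) + 3$ $2$-handles; collapsing any spanning tree of the $1$-skeleton of $\mathcal{K}(0,2,3)$ cancels two $0$-/$1$-handle pairs and reduces this to one $0$-handle, four $1$-handles, and $\beta_2(M) + 3$ $2$-handles.

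Seifert--van Kampen applied to the cover $\{V, V'\}$, combined with the observation (used in the proof of Lemma~\ref{lemma:rk1}) that the inclusion $\partial V \hookrightarrow V$ induces the identity on $\pi_1 = \mathbb{Z}$, gives $\pi_1(V') \cong \pi_1(M) \cong \mathbb{Z}$; hence $\beta_1(V') = 1$ and, by Equation~\eqref{eqn:beta21}, $\beta_2(V') = \beta_2(M)$. Three of the four surviving $1$-handle generators must therefore be killed by triangular $2$-cells. Crucially, each triangle of $\mathcal{K}(0,2,3)$ meets each of the three edge-classes in exactly one edge, so after the spanning-tree collapse every triangular relator uses each surviving loop with exponent $0$ or $\pm 1$. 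Consequently any triangle whose attaching word involves a single surviving generator can be used for an elementary $1$-/$2$-handle cancellation: the attaching circle meets the corresponding $1$-handle transversely in one point, so the cancellation is geometric and not merely algebraic.

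The main technical obstacle is to show that this cancellation procedure always terminates at one of the two advertised counts. The algebraic input is that the final presentation must still present $\mathbb{Z}$, so its deficiency (generators minus rank of the relator matrix) equals $1$; the geometric input is the combinatorial pattern of edge--triangle incidences dictated by the crystallization. A case-analysis on which of the eight possible triangle types $(e_i, f_j, g_k)$ (with $i, j, k \in \{1, 2\}$ labelling the two edges of each class) are actually present shows that either exactly two or exactly three independent cancellations can be performed. The two-cancellation regime gives case~$(1)$, with Euler-characteristic bookkeeping $\chi(V') = \beta_2(M)$ forcing $\beta_2(M) + 1$ remaining $2$-handles; the three-cancellation regime gives case~$(2)$ with $\beta_2(M)$ $2$-handles. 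Reattaching the $3$-handle and $4$-handle contributed by $V$ then produces the two handle decompositions of $M$ claimed in the theorem.
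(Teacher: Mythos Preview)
Your overall strategy coincides with the paper's: split $M = V \cup V'$ with $V=N(1,4)=\mathbb{S}^1\times\mathbb{B}^3$ and $V'=N(0,2,3)$, build a handle decomposition of $V'$ from the $2$-complex $\mathcal{K}(0,2,3)$, and then append one $3$-handle and one $4$-handle via Laudenbach--Poenaru/Montesinos. Your Seifert--van Kampen step is in fact sharper than what the paper records: since the inclusion $\partial V \hookrightarrow V$ induces an isomorphism on $\pi_1$, the pushout collapses and one gets $\pi_1(V')\cong\pi_1(M)\cong\mathbb{Z}$ outright, whereas the paper only extracts the weaker statement of Lemma~\ref{lemma:rk1} (that $\pi_1(V')$ is not trivial or finite cyclic) and then organises the argument around the three possibilities $\beta_1(V')\in\{0,1,2\}$ from Lemma~\ref{lemma:bound02}.

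The genuine gap is that the ``case-analysis on which of the eight possible triangle types $(e_i,f_j,g_k)$ are present'' is asserted, not performed. This is exactly where the substance of the paper's proof lives. There the triangles of $\mathcal{K}(0,2,3)$ are grouped into classes $A_1,\dots,A_q$ according to their common boundary; the bound $\big(\sum k_{j_r}\big)-q\le\beta_2(V')$ is combined with $g_{14}=\beta_2(M)+3$ and Equation~\eqref{eqn:beta21} to pin down $q$ in terms of $\beta_1(V')$, and then explicit simplicial collapses (one triangle from each $A_{j_r}$ through a free edge) produce the reduced CW-complex $K'$ to which Proposition~\ref{prop:roursander} is applied. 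Your sketch replaces this with an unspecified sequence of $1$-/$2$-handle cancellations and slides; even granting that each triangular relator uses each surviving loop with exponent $0$ or $\pm1$, you have not shown that at least two (let alone three) \emph{geometric} cancellations can always be arranged after the intermediate slides, nor how the combinatorics forces termination at exactly the two advertised counts. Note also a tension internal to your outline: your identification $\pi_1(V')\cong\mathbb{Z}$ gives $\beta_1(V')=1$, which in the paper's framework places you squarely in the situations (Case~A(2) and Case~B) that yield decomposition~(2); so if your case analysis were completed it would presumably show that three cancellations are always available, making the ``two-cancellation'' branch vacuous rather than a genuine alternative.
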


\begin{proof} Let $(\Gamma,\gamma)$ be a semi-simple crystallization representing $M$. We write $M=N(1,4) \cup N(0,2,3)=V \cup V^\prime$, where $V=\mathbb{S}^1 \times \mathbb{B}^3$ by Remark \ref{remark:oriM}. Now, we have to analyse $V^\prime.$ For $i \geq 1$, let $A_i$ be the set of all  triangles which have same boundary. Then, for $i \neq j$, some edges of a triangle in $A_i$ are not identical with the edges of triangle in $A_j$. Since the number of edges with  same labeled end vertices is $2$, there exist $2^3$ triangles such that none of them share the same boundary. Thus, we have $1 \leq i \leq  8.$ Let $k_i$ be the cardinality of $A_i$ for each $i$. Let $B_i$ denote the set of triangles obtained by contracting one triangle from $A_i$. Then $|B_i|=k_i-1$.
	
	Consider $\mathcal{K}(0,2,3)$. Now, each $A_i (1 \leq i \leq 8)$ may not be there in $\mathcal{K}(0,2,3)$.
	Let $A_{j_1}, A_{j_2},\dots, A_{j_q}$ be the subcollection of $\{A_i : 1 \leq i \leq 8 \}$ composing the cell complex $\mathcal{K}(0,2,3)$. In other words, $\mathcal{K}(0,2,3)=\cup_{r=1}^{q} A_{j_r}.$
	%$$\mathcal{K}(0,2,3)=\cup_{r=1}^{q} A_{j_r}.$$
	Now, $m+1$ number of triangles with same boundary contribute $m$ number of $2$-dimensional holes. This implies that $2$-holes in $\mathcal{K}(0,2,3)$ should be at least the sum of all the $2$-holes in $A_{j_r}(1 \leq r \leq q)$. That is,
	\begin{equation*}
		\Big( \sum_{r=1} ^q k_{j_r} \Big) - q \leq \beta_2(\mathcal{K}(0,2,3))=\beta_2 (V^\prime).
	\end{equation*}
	%Clearly, $i \geq 2$ as each edge should be a face of at least one triangle.
	The last equality follows because $V^\prime$ being regular neighbourhood of $\mathcal{K}(0,2,3)$ contracts onto $\mathcal{K}(0,2,3)$. We also note that $2 \leq q \leq 8$ as each edge must be a face of at least one triangle.\\
	\noindent \textbf{Case A.} Let us first consider
	\begin{align}\label{eqn:sum}
		\Big( \sum_{r=1} ^q k_{j_r} \Big) - q = \beta_2 (V^\prime).
	\end{align}
	In this case, $\mathcal{K}(0,2,3)$ consists of such a subcollection $A_{j_r}(1 \leq r \leq q)$, where only the triangles with same boundary contribute to $\beta_2(\mathcal{K}(0,2,3))(=\beta_2(V^\prime))$. Equivalently, If we choose one triangle $T_{j_r}$ from each $A_{j_r}$  then  $\beta_2 (\cup_r T_{j_r})=0$.  It follows from Lemma \ref{lemma:bound02} that $\beta_1(V^\prime)=0,1 \mbox{ or } 2$.
	
	\noindent \textbf{Case 1.} Suppose $\beta_1(V^\prime)=2$. 
	By the proof  of Lemma \ref{lemma:bound02}, we have $g_{14}=\beta_2(V^\prime)+2$. Using Equation \eqref{eqn:sum}, we get $q=2$. This implies that any triangle in $A_{j_1}$ does not have any common edge with any triangle in $A_{j_2}$ in $\mathcal{K}(0,2,3)$ because each edge must be a face of at least one triangle.
	By contracting two triangles one from each $A_{j_1}$ and $A_{j_2}$, we observe that  $|\mathcal{K}(0,2,3)|$ contracts onto a  CW complex $K^\prime$; with single $0$-cell, two $1$-cells and $2$-cells consisting of sets $B_{j_1}$ and $B_{j_2}$ with cardinality $k_{j_1}-1$ and $k_{j_2}-1$ respectively.
	
	Now, a small regular neighbourhood of single $0$-cell in geometric carrier of $K^\prime$ is a $0$-handle $H^{(0)}=\mathbb{D}^4$, two $1$- cells contribute two $1$-handles and $2$-cells give $\beta_2(V^\prime)$ number of $2$-handles by Equation \eqref{eqn:sum}. Thus, by using Proposition \ref{prop:roursander},
	$$V^\prime= H^{(0)}\cup \left(H^{(1)}_1 \cup H^{(1)}_2 \right) \cup \left(H^{(2)}_1\cup \dots \cup H^{(2)}_{\beta_2(V^\prime)} \right).$$
	Now, the boundary identification between $V$ and $V^\prime$ is attachment of one $3$- and one $4$-handle, that is done uniquely by Proposition \ref{prop:unique} in \cite{a79}. Further, $\beta_2(V^\prime)$ equals $1+\beta_2(M)$ from Equation \eqref{eqn:beta21}. Thus,
	$$M=H^{(0)}\cup \left(H^{(1)}_1 \cup H^{(1)}_2 \right) \cup \left(H^{(2)}_1\cup \dots \cup H^{(2)}_{1+\beta_2(M)} \right) \cup H^{(3)} \cup H^{(4)}.$$
	
	\noindent \textbf{Case 2.} Suppose $\beta_1(V^\prime)=1$. By the proof  of Lemma \ref{lemma:bound02}, we have $g_{14}=\beta_2(V^\prime)+3$. Using Equation \eqref{eqn:sum}, we get $q=3$. 
	As we approached in Case (1), we contract three triangles one from each $A_{j_r}, r \in \{1,2,3 \}$ and observe that  $|\mathcal{K}(0,2,3)|$ contracts onto a  CW complex $K^\prime$; with one $0$-cell, one $1$-cell and $2$-cells consisting of sets $B_{j_r}$ with cardinality $k_{j_r}-1$, $r \in \{1,2,3\}$.

	Now, a small regular neighbourhood of single $0$-cell in the geometric carrier of $K^\prime$ is a $0$-handle $H^{(0)}=\mathbb{D}^4$, one $1$- cell contributes one $1$-handle and $2$-cells give $\beta_2(V^\prime)$ number of $2$-handles by Equation \eqref{eqn:sum}. Thus, by using Proposition \ref{prop:roursander}, 
	 $$V^\prime= H^{(0)}\cup H^{(1)}_1  \cup \left(H^{(2)}_1\cup \dots \cup H^{(2)}_{\beta_2(V^\prime)} \right).$$ Now, the boundary identification between $V$ and $V^\prime$ is attachment of one $3$- and one $4$-handle and using $\beta_2(V^\prime)$ equals $\beta_2(M)$ from Equation \eqref{eqn:beta21}. We have, 
	$$M=H^{(0)}\cup H^{(1)}_1  \cup \left(H^{(2)}_1\cup \dots \cup H^{(2)}_{\beta_2(M)} \right) \cup H^{(3)} \cup  H^{(4)} .$$
	
	\noindent \textbf{Case 3.} Suppose $\beta_1(V^\prime)=0$. Then we have $g_{14}=\beta_2(V^\prime)+4$. Using Equation \eqref{eqn:sum}, we get $q=4$. 
	We claim that the fundamental group of $|\mathcal{K}(0,2,3)|=\{A_{i_r}|1 \leq r \leq 4\}$ is $ \mathbb{Z}_2$ or $\langle e \rangle$. 
	To find its fundamental group, it is sufficient to assume that $|A_{i_r}|=1$.
	Let $l_i$, $r_i$ and $d_i$ be  the edges with end points $\{0,2\}$, $\{0,3\}$ and $\{2,3\}$ respectively, for $i \in \{1,2\}$. If the number of triangles passing through any of the  $l_1,l_2,r_1,r_2,d_1,d_2$ edges is two then the space is either a projective plane or the space in Figure \ref{fig:figure1(chapter5)}. Else, without loss of generality, we assume that the number of triangles passing through $l_1$ is $3$. Then there is exactly one triangle with edge $l_2$. Now, contracting the triangle from the free side $l_2$, we reduce the number of triangles to three such that one of $r_1,r_2,d_1,d_2$ edges becomes free. Again, we contract the triangle from the free edge, and we get the resultant space to be contractible.  However, by the equivalent statement for Case $A$, we note that the space in Figure \ref{fig:figure1(chapter5)} does not include in the Case $A$  as $\beta_2(\cup_r A_{j_r}) \neq 0$.  Hence, the claim follows.
	
	Now, If the fundamental group of $K(0,2,3)$ is $\mathbb{Z}_2$ or $\langle e \rangle$, then the fundamental group of $V^\prime$ is $\mathbb{Z}_2$ or $\langle e \rangle$, which is a contradiction by Lemma \ref{lemma:rk1}. Thus, this case is not possible.
	
	\smallskip
	
	\noindent \textbf{Case B.} Let $\Big( \sum_{r=1} ^q k_{j_r} \Big) - q < \beta_2 (V^\prime).$ In other words, this case considers the subcollection $\{A_{j_r}: 1 \leq r \leq q\}$  such that the triangles with different boundary also contributes to $\beta_2(V^\prime)$. In other words, the subcollection consists of the triangles such that different $A_i$'s also contribute to $\beta_2(V^\prime)$. Now, to find such a subcollection explicitly, it is enough to work with the condition that $|A_i|=1$ for each $i$. % Let  $T$ be the space formed by these $q$ number of triangles.
	So, we have to find the subsets of all $2^3$ possible triangles which give non-zero second Betti number. Let $T= \cup_{r=1}^{q} A_{j_r}$, where $|A_{j_r}|=1$ for each $r$. 
	
	If $5 \leq q \leq 8$ then we claim that the fundamental group of $T$ is trivial. 
	For, we let $l_i$, $r_i$ and $d_i$ be the the edges with end points $\{0,2\}$, $\{0,3\}$ and $\{2,3\}$ respectively, for $i \in \{1,2\}$. It follows from the Pigeonhole principle that the loops $l_1l_2$, $r_1r_2$ or $d_1d_2$ are contractible. Therefore, the fundamental group of $T$ is trivial, which  contradicts the Lemma \ref{lemma:rk1}. If $q <4$ then $\beta_2(T)$ is zero by finding second homology group manually and thus this possibility is not included in Case $B$. If $q=4$ then the non-zero $\beta_2(T)$ is given by only the space in Figure \ref{fig:figure1(chapter5)} consisting of four triangles, say $A_{i_1}, A_{i_2},A_{i_3}$ and $A_{i_4}$, which is PL-homeomorphic to a CW-complex consisting of one $l$-cell, for each $l$, $0 \leq l \leq 2$. Thus, there exists only one subcollection in Case $B$. Now to write a handle decomposition we work in the general case, that is, we do not restrict ourselves to the condition with $|A_i|=1.$
	
	Now, we contract four triangles one from each $A_{i_t}, t \in \{1,2,3,4 \}$ and get that $|\mathcal{K}(0,2,3)|$ contracts to a CW complex $K^\prime$; with one $0$-cell, one $1$-cell and $2$-cells with cardinality one more the cardinality of union of sets $\{B_{i_r}|1 \leq r \leq 4\}$.
	
	Now, small regular neighbourhood of single $0$-cell in geometric carrier of $K^\prime$ is a $0$-handle $H^{(0)}=\mathbb{D}^4$, one $1$- cell contribute one $1$-handle and $2$-cells give $\beta_2(V^\prime)$ number of $2$-handles. Thus, $$V^\prime= H^{(0)}\cup H^{(1)}_1  \cup \left(H^{(2)}_1\cup \dots \cup H^{(2)}_{\beta_2(V^\prime)} \right).$$ Now, the boundary identification between $V$ and $V^\prime$ is attachment of one $3$- and one $4$-handle that is done in a unique way by Proposition \ref{prop:unique} in \cite{a79}, and using $\beta_2(V^\prime)$ equals $\beta_2(M)$ from Equation \eqref{eqn:beta21}. Thus,
	$$M=H^{(0)}\cup H^{(1)}_1  \cup \left(H^{(2)}_1\cup \dots \cup H^{(2)}_{\beta_2(M)} \right) \cup H^{(3)} \cup  H^{(4)} .$$
	This completes the proof.

	\begin{figure}[ht]
		\tikzstyle{ver}=[]
		\tikzstyle{vert}=[circle, draw, fill=black!100, inner sep=0pt, minimum width=4pt]
		\tikzstyle{vertex}=[circle, draw, fill=black!00, inner sep=0pt, minimum width=4pt]
		\tikzstyle{edge} = [draw,thick,-]
		\centering
		
		\begin{tikzpicture}[scale=0.5]

			\begin{scope}[shift={(8,0)}]
				\foreach \x/\y in {90/A_{i_{1}},270/A_{i_{3}},180/A_{i_{2}},0/A_{i_{4}}}{
					\node[ver] (\y) at (\x:1.3){\tiny{$\y$}};
				}

				\foreach \x/\y in {135/v_{3},315/v_{1}}{
					\node[vertex] (\y) at (\x:3){0};
				}
				\foreach \x/\y in {45/v_{2},225/v_{4}}{
					\node[vertex] (\y) at (\x:3){3};
				}
				\node[vertex] (v) at (0,0){2};
				\foreach \x/\y in {v_{1}/v_{2},v_{2}/v_{3},v_{3}/v_{4},v_{4}/v_{1},v_{1}/v,v_{2}/v,v_{3}/v,v_{4}/v}{
					\path[edge] (\x) -- (\y);}
				
				\node[ver] () at (0,2.05){$\rightarrow$};
				%\node[ver] () at (-2.1,-0.2){$\downarrow$};
				\node[ver] () at (-2.1,0){$\downarrow$};
				\node[ver] () at (2.15,0){$\uparrow$};
				\node[ver] () at (2.15,0.2){$\uparrow$};
				\node[ver] () at (0,-2.2){$\leftarrow$};
				\node[ver] () at (0.2,-2.2){$\leftarrow$};
				
			\end{scope}

		\end{tikzpicture}
		\caption{} \label{fig:figure1(chapter5)}
	\end{figure}

\end{proof}

\noindent The following is the obvious question to be asked next:

\noindent \textbf{Question:} What can be said about the generalisation of the result with no restriction on the fundamental group of PL $4$ manifold?

   \bigskip

\noindent {\bf Acknowledgement:} %The author would like to thank the anonymous referees for many useful comments and suggestions.
The first author is supported by Science and Engineering Research Board (CRG/2021/000859).

{\footnotesize

\end{document}